\newtheorem{theorem}{Theorem}
\newtheorem{lemma}{Lemma}
\newtheorem{corollary}{Corollary}
\theoremstyle{definition}
\newtheorem{definition}{Definition}
\def\Var{\mathrm{Var}}
\def\As{\mathrm{As}}
\def\Com{\mathrm{Com}}
\def\Perm{\mathrm{Perm}}
\def\Nov{\mathrm{Nov}}
\def\wt{\mathop{\fam 0 wt}\nolimits}
\numberwithin{equation}{section}
\title[Basis of the free noncommutative Novikov algebra]{Basis of the free noncommutative Novikov algebra}
\author{B. K. Sartayev}
\address{Suleyman Demirel University, Kaskelen, Kazakhstan and Sobolev Institute of Mathematics, Novosibirsk, Russia}
\email{baurjai@gmail.com}
\begin{document}

\begin{abstract}
As it is known, the defining identities of a free Novikov algebra can be obtained from a commutative algebra with a derivation. In this paper, we consider a class of algebras obtained from the class of associative algebras with a derivation that generalizes Novikov algebras. Such objects are called noncommutative Novikov algebras. We construct a monomial basis for a free noncommutative Novikov algebra.
\end{abstract}

\maketitle

\section{Introduction}

It is well-known that every associative Lyndon-Shirshov word corresponds to a unique nonassociative Lyndon-Shirshov word by the arrangement of parentheses, and the set of such words forms the basis of the free Lie algebra, see \cite{Shirshov1958}. The analog of that result for the group appeared in \cite{Lyndon1958}.

In the same way, we consider a free associative algebra in the alphabet $X$ with derivation and denote it by $\As\<X\>^{(d)}$. As an analog for the associative Lyndon-Shirshov words, we consider a subset of basis monomials of $\As\<X\>^{(d)}$ of weight $-1$, where the weight function is defined as follows:
\[
\begin{gathered}
\wt(x)=-1,\quad x\in X; \\
\wt(d(u)) = \wt(u)+1; \\
\wt(uv)=\wt(u)+\wt(v).
\end{gathered}
\]
For every monomial of $\As\<X\>^{(d)}$ of the weight $-1$, we define the rule of the arrangements of parentheses and the product operations $\succ$, $\prec$. In that way, we obtain the set of nonassociative monomials with operations $\succ$ and $\prec$. Such a set is the basis of a free noncommutative Novikov algebra, which is defined by the following identities:
\begin{gather}
    x\succ (y\prec z) = (x\succ y)\prec z,\label{eq:LIdent1} \\
    (x\prec y)\succ z - x\succ (y\succ z) = x\prec (y\succ z) - (x\prec y)\prec z. \label{eq:LIdent2}
\end{gather}
The identities \eqref{eq:LIdent1},  \eqref{eq:LIdent2} first appeared
in \cite{Loday2010}. Also, this algebra is derived in \cite{KSO2019}.

The main motivation for this work is as follows: The defining identities of the variety of Novikov algebras come from commutative algebra with derivation under the operation
$$a\circ b=ab'\;\textrm{or}\; a\circ b=a'b.$$
This depends on the definition of the Novikov algebra. We denote by $\Nov$ the variety of Novikov algebras. The variety of noncommutative Novikov algebras is a generalization of the variety of Novikov algebras in the case $a\succ b=b\prec a$ (or $\succ^{op}=\prec$). In this case, the identity (\ref{eq:LIdent1}) becomes the right-commutative (left-commutative) identity and the identity (\ref{eq:LIdent2}) becomes the left-symmetric (right-symmetric) identity. 

Let's take a look at some well-known works on Novikov algebras. The defining identities of the variety of Novikov algebras first appeared in \cite{GD79}. The usage of Novikov algebras for the classification of linear Poisson brackets of hydrodynamic type was given in \cite{BN85}. Results on the classification of simple Novikov algebras were presented in \cite{Xu1996,Xu2001,Zelm}. Although some structural problems of Novikov algebras were studied in works such as \cite{Osborn92,Osborn92-1,Osborn94}, further research is still being conducted on other structural aspects of these algebras. For example, \cite{Pan2022} studied prime and semiprime Novikov algebras. The Specht property for Novikov algebras was established in \cite{DotIsmUmi2023}.
One of the main results for the free Novikov algebras is the monomial basis of $\Nov\<X\>$, which can be found in \cite{DzhLofwall}. Recently, it was shown that every Novikov algebra can be embedded into some commutative algebra with derivation, as described in \cite{BCZ2017} and \cite{PK-BS2021}. The analog of this result for noncommutative Novikov algebras is given in \cite{erlagol2021}: every noncommutative Novikov algebra can be embedded into some associative algebra with derivation. Using the results on the embedding, we obtain the following diagram:

\begin{picture}(30,80)
\put(215,57){$\hookrightarrow$}
\put(160,44){\tiny{$\succ^{op}=\prec$}
\normalsize\rotatebox[origin=c]{270}{$\hookrightarrow$}}
\put(215,32){$\hookrightarrow$}
\put(240,44){{\normalsize\rotatebox[origin=c]{270}{$\hookrightarrow$}}\tiny{$\;\;\cdot^{op}=\cdot$}}
\put(178,31){$\textrm{N-}\Nov$}
\put(183,57){$\Nov$}
\put(233,31){$\As^{(d)}$}
\put(233,57){$\Com^{(d)}$}
\end{picture}
\vspace*{-\baselineskip}

\noindent where $\textrm{N-}\Nov$ and $\Com^{(d)}$ stand for the variety of noncommutative Novikov algebras and variety of commutative algebras with derivation, respectively.

There was an attempt to find the Gröbner-Shirshov basis for the operad N-$\Nov$ in the sense \cite{Bremner-Dotsenko}. However, during the calculation, new non-trivial compositions appeared up to degree 9, and it is expected that this list will only grow. Therefore, we found it necessary to construct a monomial basis of the free $\textrm{N-}\Nov\<X\>$ algebra.

\section{Noncommutative Novikov operad}
In this section, we give some interesting results related to the noncommutative Novikov operad.

The operad $\Nov$ has its own distinguished role in the combinatorics of derivations on non-associative algebras \cite{KSO2019}: given a binary operad $\Var$, the Manin white product $\Nov\circ \Var$ is the operad governing the class of derived $\Var $-algebras. The latter are obtained from $\Var $-algebras with a derivation $d$ relative to the new operations 
\[
a\prec b = a d(b),\quad a\succ b = d(a)b
\]
(for each of binary products in $\Var$, if there are more than one operation).
For that reason the free noncommutative Novikov algebra can be embedded into free associative algebra with derivation, i.e., consider $\Var=\As$.
Another interesting example is $\Var=\Perm$ and one obtains
$\Nov\circ\Perm$ is di-$\Nov$. For more details, see \cite{PK-BS-perm}.

As shown in \cite{erlagol2021}, the subspace $\As_{-1}\<X\>^{(d)}$ spanned 
by all monomials of weight $-1$ in  
$\As\<X\>^{(d)}$ algebra is isomorphic to 
the free noncommutative Novikov algebra $\textrm{N-}\Nov\langle X\rangle$
under the mapping
\[
\tau:\textrm{N-}\Nov\<X\> \rightarrow \As\<X\>^{(d)} 
\]
which is defined by $\tau(a\prec b) = a d(b)$ and $\tau(a\succ b) = d(a)b$.
To obtain the dimension of operad N-$\Nov$ we use the result of \cite{Dzh-nonKoszul}. That is
\[
\dim(\Nov(n))=\binom{2n-2}{n-1}.
\]
Since all monomials of algebra $\Com\<X\>^{(d)}$ of weight~$-1$ are spanning elements of algebra $\Nov\<X\>$ under the mapping $\tau$, we obtain the dimension of the operad $\textrm{N-}\Nov$:
\[
\dim(\textrm{N-}\Nov(n))=n!\binom{2n-2}{n-1}.
\]

Let us consider Koszul dual operad of N-$\Nov$ which is denoted by $\textrm{N-}\Nov^!$. The defining identities of the operad $\textrm{N-}\Nov^!$ are
\begin{gather*}
x_1\dashv(x_2\dashv x_3)=0, \\
(x_1\vdash x_2)\vdash x_3=0,
\end{gather*}
\begin{gather*}
x_1\vdash(x_2\dashv x_3)=(x_1\vdash x_2)\dashv x_3, \\
x_1\dashv(x_2\vdash x_3)=x_1\vdash(x_2\vdash x_3)=(x_1\dashv x_2)\dashv x_3=(x_1\dashv x_2)\vdash x_3.
\end{gather*}
From degree $4$, the dimension of operad $\textrm{N-}\Nov^!$ is $0$ which gives that the nilpotency index of $\textrm{N-}\Nov^!$ is $4$. 

\section{Basis of the free noncommutative Novikov algebra}

In this section, we state the main result of the paper.

Note that the operad $\textrm{N-}\Nov $ is nonsymmetric: the order of variables 
in all defining identities is fixed. Hence (see \cite{Loday2001}), it is enough to find a monomial basis of the 1-generated free 
algebra $F = \textrm{N-}\Nov\langle x\rangle $. Each monomial in $F$ represents 
a binary tree with vertices labeled by the operation symbols $\prec$, $\succ $ (basic tree). In order to get a monomial basis 
of $\textrm{N-}\Nov\langle X\rangle $ 
for an arbitrary set $X$ of generators, 
it is enough to mark the leaves of basic trees by the elements 
of~$X$ in all possible ways.

For every $u = x^{(n_1)}\dots x^{(n_k)}$, $\wt (u)=-1$, 
define a {\em standard bracketing} denoted $[u]$ or $[n_1,\dots , n_k]$.
The latter is a planar binary tree with $k$ leaves
whose vertices are labeled by $\prec $ and $\succ $.
Thus, such a tree is a monomial in free algebra with 
two operations generated by a single variable~$x$.

\begin{definition}\label{basiswords}
Let $u$ be a word in 
$\As\langle x\rangle^{(d)}$
such that $|u|=k+1$, $\wt(u)=-1$. 
If $u = x^{p} x^{(k)} x^{k-p}$ then 
\[
[u] = 
\underset{p}{\underbrace{
x\prec \big (x\prec \big(x\prec \dots \prec \big(x}}\prec 
( \dots((x\succ
\underset{k-p}{\underbrace{
 x)\succ x)\succ \dots \succ x)} }
 \big )\dots \big)\big);
\]
If $u$ contains more than two derivatives then 
there should exist 
a proper subword $v$ of length $>1$ such that $\wt(v)=-1$. 
Choose the shortest leftmost such subword in $u$ of length $>1$, so that 
$u = u_1vu_2$. Assume $i$ is the starting position of the chosen subword $v$ in $u$. Then find the standard bracketings $[w]$ and $[v]$ for the shorter words $w = u_1xu_2$ and $v$ by induction. Finally, define a
$[u]$ by substitution of $[v]$ to the $i$th position in $[w]$. A \textit{normal form} of $u$ is $[u]$. 
\end{definition}

For example, let 
\[
u = x x' x'' x' x'' x x .
\]
The shortest proper leftmost subword of weight $-1$ is 
$v=xx'$: $u = v x'' x' x'' x x$ (single letter $x$ is not the desired 
subword as it is of length 1). Note that 
$[xx'] = x\prec x$ and proceed to 
\[
w = x x'' x' x'' x x .
\]
Here the shortest leftmost subword of weight $-1$ is 
$v_1 = x''xx$: $w = xx''x'v_1$. 
For $w_1 = xx''x'x$, we have
$w_1=xx''v_2$, and $w_2=xx''x$, where $v_2=x'v_1$. By definition, we obtain
\[
[w_2] = x\prec (x\succ x).
\]
Proceed to $v_2 = x'(x''xx)$ and substitute the result into $[w_2]$: 
\[
[v_2] = x\succ ((x\succ x)\succ x ), \quad 
[w] = x\prec (x\succ (x\succ ((x\succ x)\succ x ))).
\]
Finally, 
\[
[u] = (x\prec x)\prec (x\succ (x\succ ((x\succ x)\succ x ))).
\]
 
For each monomial $u\in\As\<X\>^{(d)}$ with a weight $-1$ correspond unique monomial $[u]$. Denote by $U$ the set of monomials $\As\<X\>^{(d)}$ of weight $-1$ and by $[U]$ corresponding set to $U$.

Let us give all monomials of $[U]$ of degree $3$. The monomials of degree $3$ are
$$U_3=\{x''xx,xx''x,xxx'',x'x'x,x'xx',xx'x'\},$$
where $U_3\subset U$ and $U_n$ is a set of words of degree $n$.
The corresponding set to $U_3$ is 
\begin{multline}\label{length3}
[U_3]=\{(x\succ x)\succ x,x\prec(x\succ x),x\prec(x\prec x),x\succ(x\succ x),\\
(x\succ x)\prec x,(x\prec x)\prec x\}
\end{multline}

\begin{theorem}
The set $[U]$ is the basis of the one-generated free noncommutative Novikov algebra.
\end{theorem}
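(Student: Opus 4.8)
The plan is to prove the theorem in two halves: first, that the set $[U]$ spans $F = \textrm{N-}\Nov\langle x\rangle$ (equivalently, spans $\textrm{N-}\Nov\langle X\rangle$ after decorating leaves), and second, that $[U]$ is linearly independent. For the spanning part, I would work inside $\As\langle x\rangle^{(d)}$ via the isomorphism $\tau$ of \cite{erlagol2021}: since $\tau$ identifies $F$ with the subspace $\As_{-1}\langle x\rangle^{(d)}$ spanned by all weight $-1$ monomials, and since $U$ is \emph{by construction} the set of \emph{all} such monomials (the usual monomial basis of the free associative algebra with derivation, restricted to weight $-1$), it suffices to show that $\tau([u])$ equals $u$ modulo lower-complexity weight $-1$ monomials under a suitable ordering — or, more cleanly, that the map $[u]\mapsto u$ is a bijection $[U]\to U$ realized by $\tau$ up to a unitriangular change of basis. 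Concretely, I would set up a well-ordering on weight $-1$ associative monomials (say by length, then by the position/multiplicities of the derivatives, using the recursion in Definition~\ref{basiswords}), and show by induction on length that $\tau([u]) = \pm u + (\text{earlier monomials})$. The base cases are the explicit degree $\le 3$ lists already displayed; the inductive step unwinds the two defining clauses of $[u]$ — the ``two-derivative'' block $x^{p}x^{(k)}x^{k-p}$ and the ``substitute $[v]$ into $[w]$'' clause — and checks that applying $\tau$ (i.e. replacing $a\prec b \rightsquigarrow ad(b)$, $a\succ b\rightsquigarrow d(a)b$) reproduces $u$ up to sign and lower terms, using the Leibniz rule to move the derivatives.

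For linear independence, the cheapest route is a dimension count. The excerpt records $\dim(\textrm{N-}\Nov(n)) = n!\binom{2n-2}{n-1}$, hence the $1$-generated component in degree $n$ has dimension $\binom{2n-2}{n-1}$ (strip the $n!$ coming from labelings of leaves, or equivalently note this is the nonsymmetric operad dimension). On the other hand, $|U_n|$ is exactly the number of weight $-1$ monomials in $\As\langle x\rangle^{(d)}$ of degree $n$ — a word $x^{(a_1)}\cdots x^{(a_n)}$ has weight $-1$ iff $\sum a_i = n-1$, so $|U_n|$ counts compositions, i.e. $\binom{2n-2}{n-1}$ (number of ways to write $n-1$ as an ordered sum of $n$ nonnegative integers). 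Since $[U_n]\to U_n$ is a bijection, $|[U_n]| = \binom{2n-2}{n-1} = \dim F_n$. Combined with spanning, this forces $[U_n]$ to be a basis in each degree. I would state this count as a short lemma and verify it against the degree $3$ data: $\binom{4}{2}=6$, matching $|U_3|=6$.

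The main obstacle is the spanning/unitriangularity step: one must argue that the $\tau$-images $\{\tau([u]) : u\in U\}$ are triangular with respect to $\{u : u\in U\}$ under \emph{some} monomial order, and this requires pinning down precisely which lower-order terms the Leibniz rule generates when $\tau$ is applied to a standard bracketing. The recursive, somewhat intricate definition of $[u]$ (especially the ``shortest leftmost weight $-1$ subword'' bookkeeping and the substitution into the auxiliary word $w$) is what makes this delicate; the right order must be compatible with that recursion so that the induction closes. Everything else — the base cases, the combinatorial identity $|U_n| = \binom{2n-2}{n-1}$, and the passage from the $1$-generated case to general $X$ via the nonsymmetry remark of \cite{Loday2001} — is routine once that triangularity is in hand. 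An alternative that sidesteps the order entirely is to show directly that $[U]$ spans by rewriting every basic tree monomial into a $\mathbb{Z}$-combination of elements of $[U]$ using the relations \eqref{eq:LIdent1}–\eqref{eq:LIdent2}; then independence again follows from the dimension count. I would present whichever of the two spanning arguments turns out to have the shorter induction, but I expect the $\tau$-triangularity argument to be the cleaner one to write down.
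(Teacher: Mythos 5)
Your plan is essentially the paper's proof: the author establishes exactly the triangularity you describe, namely $\tau([u]) = u + \sum_i \lambda_i v_i$ with $u > v_i$ (Lemmas \ref{lemma1} and \ref{order}, using an order built from the position $\rho(u)$ and the derivation degree of the ``head'' of $u$), and concludes with the same count $|U_n| = |[U_n]| = \dim F_n = \binom{2n-2}{n-1}$. The only caveat is that your proposal defers precisely the step on which the paper spends all of its effort --- constructing an order compatible with the recursion of Definition \ref{basiswords} and verifying that the resulting rewriting terminates (the paper's four-case analysis) --- so the plan is sound and matches the paper, but that core step remains to be carried out.
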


\section{Proof of the main result}
In this section, we prove the main result of the paper.
\begin{proof}
It is enough to show that for every 
weight-homogeneous differential polynomial
$u\in \As\<x\>^{(d)}$ of $\wt(u)=-1$ there exists $[v_1],\ldots,[v_r]\in [U]$ such that 
\begin{equation}\label{equality}
u=\sum_{i=1}^{r} \lambda_i \tau([v_i]), \quad \lambda_i\in \Bbbk .
\end{equation}
Indeed, assume \eqref{equality} holds and 
$w\in \textrm{N-}\Nov\langle x\rangle $
is a homogeneous element of degree $n$.
Then $u=\tau(w)$ is a weight-homogeneous differential polynomial 
which can be presented by \eqref{equality}. Since $\tau $ is injective, we obtain 
\[
w=\sum_{i=1}^{r} \lambda_i [v_{i}], \quad [v_i]\in [U_n].
\]
Hence, the set $[U_n]$ is linearly complete in the space $F_n$
of degree $n$ elements in $ \textrm{N-}\Nov\langle x\rangle$.
Thus this is a basis since $|U_n|=|[U_n]|=\dim F_n$.

The statement \eqref{equality} is enough to prove for monomials only.
Let us apply induction on the degree $k$ of $u$. 
The base of the induction is provided by the obvious cases 
with $k=1,2$ and, for $k=3$, by the set \eqref{length3}. 
Suppose \eqref{equality} holds for all words of degree less than~$k$.

To prove the main result, let us state the following definition:

\begin{definition}\label{rho}
Let $$[u]=[u_1]\prec(\cdots\prec([u_{t-1}]\prec([u_{t}]\prec((\cdots(x\succ [u_{t+1}])\succ\cdots)\succ [u_{t+l}])))\cdots)$$
and
$$u=x^{(n_1)}x^{(n_2)}\cdots x^{(n_p)}\cdots x^{(n_{k-1})}x^{(n_{k})}$$ 
be the words of the sets $[U]$ and $U$, respectively, 
such that $n_p=t+l$.
We define a function $\rho:u\rightarrow \mathbb{Z_+}$ as follows:
$$\rho(u)=p,$$
i.e., this is the position of a generator $x$ with some non-zero differentiation degree from the left side of $u$, given the form $[u_1]\cdots [u_{t}]x^{(t+l)}[u_{t+1}]\cdots[u_{t+l}]$ obtained by rewriting $u$ by Definition \ref{basiswords}.
The generator $x^{(n_p)}$ is called a $head$ of $u$ and we denote it by $h(u)$.
\end{definition}

Let us define an order on generators of $\As\<x\>^{(d)}$ as follows:
$$x^{(a)}>x^{(b)}\;\;\; \textrm{if}\;\;\; a>b.$$
For arbitrary monomials $u$ and $v$ from the set $U$, we define an order inductively as follows: 
$$u>v\;\;\; \textrm{if}\;\;\;    \rho(u)<\rho(v),$$
or
$$\rho(u)=\rho(v)\;\;\;\textrm{and}\;\;\; h(u)>h(v).$$
For the case $\rho(u)=\rho(v)$ and $h(u)=h(v)$, we have
$$[u]=[u_1]\prec(\cdots\prec([u_{t-1}]\prec([u_{t}]\prec((\cdots(x\succ [u_{t+1}])\succ\cdots)\succ [u_{t+l}])))\cdots)$$
and
$$[v]=[v_1]\prec(\cdots\prec([v_{t-1}]\prec([v_{t}]\prec((\cdots(x\succ [v_{t+1}])\succ\cdots)\succ [v_{t+l}])))\cdots),$$
and compare $u_i$ and $v_i$ lexicographically as given above i.e. $u>v$ if $u_i=v_i$ for $i=1,\ldots,p$ and $u_{p+1}>v_{p+1}$.

\begin{lemma}\label{lemma1}
If $\tau([u])=u+\sum_{i}\lambda_i v_i$ and $\rho(u)=p$, then one of the following cases holds:
\begin{itemize}
    \item $\rho(v_i)> p,$
    \item $\rho(v_i)=p$ and $h(u)>h(v_i),$
    \item $\rho(v_i)=p$ and $h(u)=h(v_i).$
\end{itemize}   
\end{lemma}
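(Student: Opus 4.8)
The plan is to expand $\tau([u])$ from the recursive shape of $[u]$ recorded in Definition~\ref{rho}, and then to locate the head of every monomial that occurs. Write
\[
[u]=[u_1]\prec\bigl(\cdots\prec([u_{t}]\prec C)\cdots\bigr),\qquad C=(\cdots(x\succ [u_{t+1}])\succ\cdots)\succ [u_{t+l}],
\]
and set $P_j=\tau([u_j])$. Repeated use of $\tau(a\prec b)=a\,d(b)$, $\tau(a\succ b)=d(a)\,b$ and the Leibniz rule gives
\[
\tau([u])=P_1\,d\bigl(P_2\,d\bigl(\cdots d\bigl(P_t\,d(\tau(C))\bigr)\cdots\bigr)\bigr),
\]
where $\tau(C)$ is obtained from $x$ by $l$ steps of the form $b\mapsto d(b)\,P_{t+j}$. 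Exactly $t+l$ applications of $d$ occur; distributing them by the Leibniz rule, every monomial of $\tau([u])$ has the form
\[
w_1\cdots w_t\;x^{(m)}\;w_{t+1}\cdots w_{t+l},
\]
where the distinguished letter $x^{(m)}$ is the descendant of the bare $x$ of $C$, $1\le m\le t+l$ (it is $\ge1$ because the $\tau$-image of the operation directly above $x$ in $[u]$ differentiates $x$), each $w_j$ is a monomial of $P_j^{(c_j)}$ for some $c_j\ge0$, and $m+\sum_j c_j=t+l$. Since $\tau$ and $d$ preserve the number of letters, $|w_j|=|u_j|$, so the distinguished letter always occupies position $|u_1\cdots u_t|+1=p$.

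The part of $\tau([u])$ with $m=t+l$ --- all derivatives on the distinguished letter --- is exactly $P_1\cdots P_t\,x^{(t+l)}\,P_{t+1}\cdots P_{t+l}$. By the inductive hypothesis of the lemma applied to each $[u_j]$, which has smaller degree, $P_j=\tau([u_j])=u_j+(\text{lower-order monomials})$, so this part contributes the monomial $u=u_1\cdots u_t\,x^{(t+l)}\,u_{t+1}\cdots u_{t+l}$ with coefficient $1$, together with monomials $w_1\cdots w_t\,x^{(t+l)}\,w_{t+1}\cdots w_{t+l}$ in which $w_j\ne u_j$ for some $j$. Consequently, in $\tau([u])=u+\sum_i\lambda_i v_i$ every $v_i$ is a monomial $w_1\cdots w_t\,x^{(m)}\,w_{t+1}\cdots w_{t+l}$ different from $u$, with either $m<t+l$, or $m=t+l$ and some $w_j\ne u_j$.

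It now suffices to establish the following anchoring statement: when the normal-form procedure of Definition~\ref{basiswords} is run on such a $v_i$, the head is never selected strictly to the left of the distinguished letter $x^{(m)}$, so that $\rho(v_i)\ge p$. Granting this, if $\rho(v_i)>p$ we are in the first case of the lemma; if $\rho(v_i)=p$ then, by definition of $h$, $h(v_i)$ is the letter in position $p$ of $v_i$, namely $x^{(m)}$, and since $m\le t+l$ with equality only when $m=t+l$, we are in the second case when $m<t+l$ and in the third case when $m=t+l$. This is exactly the trichotomy asserted, so the anchoring statement yields the lemma.

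The anchoring statement is the one delicate point, and it is where the induction must be arranged carefully. The thing to prove is: if $v=A\,x^{(m)}\,B$ with $m\ge1$, $A=w_1\cdots w_t$ and each $w_j$ a monomial of $\tau([u_j])^{(c_j)}$, then at every stage of the normal-form recursion the shortest leftmost subword of weight $-1$ and length $>1$ lies inside the still unreduced part of $A$, so that all of $A$ is absorbed into the left factors before the recursion can reach $x^{(m)}$; hence the head of $v$ sits in position $\ge|A|+1=p$. I would prove this by induction on degree, feeding in the inductive hypothesis of Lemma~\ref{lemma1} for the blocks $[u_j]$: when $c_j=0$ it bounds, in the order of Definition~\ref{rho}, the monomials $w_j$ that can occur and hence controls their prefix weights, while when $c_j\ge1$ the block $w_j$ has nonnegative weight and one checks directly that no subword of $v$ of weight $-1$ escapes $w_1\cdots w_j$. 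Everything else --- the expansion of $\tau([u])$, the relation $m+\sum_j c_j=t+l$, the identification of the leading monomial $u$, and the bookkeeping that converts the anchoring statement into the trichotomy --- I expect to be routine; the anchoring statement is where the real work lies.
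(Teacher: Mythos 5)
Your proposal follows essentially the same route as the paper: expand $\tau([u])$ by the Leibniz rule, observe that every resulting monomial has the form $w_1\cdots w_t\, x^{(m)}\, w_{t+1}\cdots w_{t+l}$ with the distinguished letter sitting at position $p$ and $m\le t+l$, and reduce the lemma to the claim that the normal form of $v_i$ never places its head strictly to the left of position $p$. That anchoring claim is precisely the step the paper itself only sketches (a weight count on the prefix in the special case $u=x^{p-1}x^{(l)}x^{l-p+1}$, with the general case declared ``similar''), so your more explicit Leibniz bookkeeping together with the acknowledged-but-deferred induction for the anchoring statement matches the paper's argument and its level of detail.
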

\begin{proof}
If $u=x^{p-1}x^{(l)}x^{l-p+1}$ then
\begin{equation}\label{basiccase}
  \tau([u])=x(\cdots(x(x((\cdots(x'x)'\cdots)' x)')')'\cdots)'.  
\end{equation}
The condition $\rho(v_i)\geq p$ means that the head of $v_i$ either stays at the same position or moves to the right side relative to the head of $u$.

Let us first consider the subword of $u$ that stays to the left of $h(u)$. The weight of that subword is $-p+1$, because the number of generators to the left of $h(u)$ is $x^{p-1}$. Calculating $\tau([u])$ using Leibniz rule, the maximal weight of the left subword of $v_i$ relative to the head of $u$ is $-1$. By Definition $\ref{basiswords}$, the head of $v_i$ cannot be in that left subword, and we obtain $\rho(v_i)\geq p$. The general case can be proved similarly.

The equalities $\rho(u)=\rho(v_i)$ and $h(u)=h(v_i)$ hold in the case
$$[u]=[u_1]\prec(\cdots\prec([u_{t-1}]\prec([u_{t}]\prec((\cdots(x\succ [u_{t+1}])\succ\cdots)\succ [u_{t+l}])))\cdots)$$
and $v_i$ is an arbitrary monomial of the sum
$$\tau([u_1])\cdots\tau([u_{t-1}])\tau([u_{t}]) x^{(t+l)} \tau([u_{t+1}])\cdots \tau([u_{t+l}]).$$
\end{proof}

\begin{lemma}\label{order}
If $\tau([u])=u+\sum_{i}\lambda_i v_i$, then $u> v_i$.
\end{lemma}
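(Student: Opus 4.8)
The plan is to prove Lemma \ref{order} by combining Lemma \ref{lemma1} with an analysis of the remaining case, namely when $\rho(v_i)=p$ and $h(u)=h(v_i)$. By Lemma \ref{lemma1}, whenever $\rho(v_i)>p$ we immediately get $u>v_i$ from the definition of the order, and whenever $\rho(v_i)=p$ with $h(u)>h(v_i)$ we again get $u>v_i$ directly. So the entire content of Lemma \ref{order} beyond Lemma \ref{lemma1} is the third bullet: I must show that for every monomial $v_i$ appearing in $\tau([u])$ with $\rho(v_i)=\rho(u)=p$ and $h(v_i)=h(u)$, we still have $u>v_i$ in the lexicographic refinement.

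First I would unwind what such a $v_i$ looks like. As recorded at the end of the proof of Lemma \ref{lemma1}, if
$$[u]=[u_1]\prec(\cdots\prec([u_{t-1}]\prec([u_{t}]\prec((\cdots(x\succ [u_{t+1}])\succ\cdots)\succ [u_{t+l}])))\cdots),$$
then the monomials $v_i$ with the head in the same slot and of the same differentiation degree are exactly the monomials occurring when one expands the product
$$\tau([u_1])\cdots\tau([u_{t-1}])\tau([u_{t}])\, x^{(t+l)}\, \tau([u_{t+1}])\cdots \tau([u_{t+l}])$$
by the Leibniz rule and keeps all $t+l$ derivatives on the distinguished central letter $x^{(t+l)}$. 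Consequently such a $v_i$ is a concatenation $w_1\cdots w_{t-1}\, w_t\, x^{(t+l)}\, w_{t+1}\cdots w_{t+l}$, where each $w_j$ is a monomial appearing in $\tau([u_j])$. By the induction hypothesis on degree (each $u_j$ has degree strictly smaller than $k$), Lemma \ref{order} applies to each block: every monomial $w_j$ occurring in $\tau([u_j])$ satisfies $u_j \geq w_j$, with equality only for the leading term $w_j=u_j$. I would then argue that the global comparison reduces to comparing the blocks one at a time from the left, because $\rho$ and $h$ agree by assumption and the definition of the order on words with equal $\rho$ and $h$ is precisely the block-by-block lexicographic comparison via the $[u_j]$'s. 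Hence $u=v_i$ is possible only when every $w_j$ is the leading term $u_j$, i.e.\ $v_i=u$; for every other $v_i$ there is a leftmost block $j$ with $u_j>w_j$ (strictly), which forces $u>v_i$.

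The main obstacle I expect is the bookkeeping that shows the leading term of the expansion of $\tau([u])$ is exactly $u$ itself with coefficient $1$, and that the order just defined is genuinely compatible with the recursive block structure — in other words, that ``comparing $u_i$ and $v_i$ lexicographically'' in the definition really is a well-founded total order and that the concatenation of leading blocks reproduces $u$. This requires checking that applying $\tau$ to the standard bracketing $[u_j]$ and then placing all of $u_j$'s derivatives in their original positions recovers the subword $u_j$ of $u$, which is essentially the statement that $\tau\circ[\,\cdot\,]$ is a section of the identification $U\to[U]$ at the level of leading terms; this is built into Definition \ref{basiswords} but needs to be spelled out carefully together with the Leibniz expansion so that no cross terms are accidentally as large as $u$. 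Once that is in place, I would assemble the three cases: Lemma \ref{lemma1} disposes of the first two, and the block induction just described disposes of the third, completing the proof that $u>v_i$ for all $i$.
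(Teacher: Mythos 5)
Your proposal follows essentially the same route as the paper: Lemma \ref{lemma1} settles the first two cases, and the remaining case ($\rho(v_i)=\rho(u)$, $h(v_i)=h(u)$) is handled by decomposing $v_i$ as a concatenation of monomials from the blocks $\tau([u_j])$ and comparing block by block from the left, recursing until the base case \eqref{basiccase}. Your version is somewhat more explicit about the induction on degree and about the need to verify that the leading term is $u$ itself and that the block decomposition of $v_i$ matches that of $u$, but the argument is the same.
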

\begin{proof}
By Lemma \ref{lemma1} and the given order, it suffices to consider the case where $v_i$ is an arbitrary monomial of the sum 
$$\tau([u_1])\cdots\tau([u_{t-1}])\tau([u_{t}]) x^{(t+l)} \tau([u_{t+1}])\cdots \tau([u_{t+l}]).$$
Similarly, using the given order, we compare $u_1$ with all monomials of $\tau([u_1])$, and then repeat the same procedure for $u_2,\ldots,u_{t+l}$ with all monomials of $\tau([u_2]),\ldots,\tau([u_{t+l}])$, respectively. This leads us to the case (\ref{basiccase}).
\end{proof}

For given order $>$, the list of monomials with derivation degree one on each generator is
\begin{equation}\label{basicmonomials}
\underset{p}{\underbrace{x'\cdots x'}}\; x>\underset{p-1}{\underbrace{x'\cdots x'}}\; x \; x'>\cdots>
x \underset{p}{\underbrace{x'\cdots x'}}>x'\; x \underset{p-1}{\underbrace{x'\cdots x'}}
.
\end{equation}
Notice that for all monomials of the form (\ref{basicmonomials}) the equality (\ref{equality}) holds, i.e.,
for $w-\underset{t}{\underbrace{x'\cdots x'}}\; x \; \underset{l}{\underbrace{x'\cdots x'}}\;$, i.e., we have
$$w=\tau((\cdots( (\underset{t}{\underbrace{x\succ(\cdots\succ(x}}\succ x)\cdots)) \underset{l}{\underbrace{\prec x)\prec\cdots)\prec x}}).$$

By Lemma \ref{order}, for any $u\in U$ we have
$$u=\tau([u])-\sum_{i}\lambda_i v_i,$$
where $u>v_i$, and we call it a $rewriting\; rule$ of monomial $u$. For $v_i$ we consider 4 cases:

Case 1: If $\rho(u)=1$ then
$$[u]=(\cdots(x\succ [u_{1}])\succ\cdots)\succ [u_{t}]$$
and
\begin{equation*}
u=x^{(t)}x^{(m_1)}x^{(m_2)}\cdots x^{(m_{r})}.    
\end{equation*}
By Lemma \ref{order},
$$u=\tau([u])-\sum_i \lambda_i u_i,$$
where $u>u_i$. For $u_i$, it holds that either it is a monomial in the sum $x^{(t)}\tau([u_1])\tau([u_2])\cdots \tau([u_t])$, or $h(u)>h(u_i)$.
For the first case, applying the rewriting rule on monomials $u_1,\ldots,u_t$ several times, finally, we obtain monomials of the form (\ref{basicmonomials}) and we denote them by $u_1',\ldots,u_r'$, respectively. We obtain
$$u'=x^{(t)}u_1'\cdots u_t'$$
and
$$u'=\tau((\cdots(x\succ [u_{1}'])\succ\cdots)\succ [u_{t}'])-
\sum_{m_1,\ldots,m_{t}}\sum_{i=1}^{r-1} x^{(i)}x^{(m_1)}\cdots x^{(m_{r})}.$$
The remained monomials correspond to the case $h(u)>h(u_i)$.

For the case $h(u)>h(u_i)$, using the rewriting rule several times, we obtain $h(u_i)=1$. For this case, we have 
$$u_i=x'x^{(m_1)}x^{(m_2)}\cdots x^{(m_{r})}$$
By Definition \ref{basiswords} and inductive hypothesis on length of monomial,
$$x^{(m_1)}x^{(m_2)}\cdots x^{(m_{r})}$$
can be written as a sum of $[v_1],\ldots,[v_r]\in[U]$, and
$$u_i=\tau(x\succ(\sum_{i=1}^r \lambda_i[v_i])),$$
where $x\succ[v_i]\in [U]$.

Case 2: $\rho(u)=\rho(v_i)\neq 1$ and $h(u)=h(v_i)$.
If 
\begin{equation}\label{[u]}
[u]=[u_1]\prec(\cdots\prec([u_{t-1}]\prec([u_{t}]\prec((\cdots(x\succ [u_{t+1}])\succ\cdots)\succ [u_{t+l}])))\cdots) 
\end{equation}
and
\begin{equation}\label{u}
u=x^{(n_1)}x^{(n_2)}\cdots x^{(n_p)}\cdots x^{(n_{k-1})}x^{(n_{k})},
\end{equation}
then using rewriting rule on monomials $u_1,\ldots,u_{t+l}$ several times, by Lemma \ref{order}, we finally obtain monomials of the form (\ref{basicmonomials}), which we denote by $u_1',\ldots,u_{t+l}'$. We get the minimal monomial for case 2, which is 
$$u'=u_1'\cdots u_t' x^{(t+l)} u_{t+1}'\cdots u_{t+l}',$$
and applying the rewriting rule to it, we obtain 
\begin{multline*}
u'=\tau([u_1']\prec(\cdots\prec([u_{t}']\prec((\cdots(x\succ [u_{t+1}'])\succ\cdots)\succ [u_{t+l}'])))\cdots))- \\
\sum_{m_1,\ldots,m_{p+l}}\sum_{i=1}^{t+l-1} x^{(m_1)}\cdots x^{(m_{p-1})} x^{(i)} x^{(m_{p+1})}\cdots x^{(m_{p+l})}.
\end{multline*}
The monomial $[u_1']\prec(\cdots\prec([u_{t}']\prec((\cdots(x\succ [u_{t+1}'])\succ\cdots)\succ [u_{t+l}'])))\cdots)\in [U]$ and the remained monomials of the sum correspond to case 3.

Case 3: $\rho(u)=\rho(v_i)\neq 1$ and $h(u)>h(v_i).$ Let $u\in U$. If the derivation degree of the head $v_i$ decreases, then by Lemma \ref{lemma1}, the head of $v_i$ moves to the right side ultimately, i.e., we obtain $\rho(u)<\rho(v_i)$. This corresponds to case 4.

Case 4: $\rho(u)\neq 1$ and $\rho(u)<\rho(v_i)$.
Let $u\in U$, and by using the rewriting rule on $u$ and the cases 2-3, the head of $v_i$ always moves to the right side. If $u$ has a form (\ref{[u]}) and (\ref{u}), then the head comes to the rightmost generator ultimately. Finally, we get a certain monomial $w$ of the following form:
\begin{equation*}
[w]=[w_1]\prec(\cdots\prec([w_{r-1}]\prec([w_{r}]\prec x))\cdots)    
\end{equation*}
and
\begin{equation*}
w=x^{(m_1)}x^{(m_2)}\cdots x^{(m_{t})}x^{(r)}.    
\end{equation*}
By Lemma \ref{order},
$$w=\tau([w])-\sum_i \lambda_i w_i,$$
where $w>w_i$. It is possible in the case $w_i$ is a monomial of the sum 
$$\tau([w_1])\tau([w_2])\cdots \tau([w_r])x^{(r)}$$ or $h(w)>h(w_i)$.
For the first case, applying the rewriting rule on monomials $w_1,\ldots,w_r$ several times, finally, we obtain monomials of the form (\ref{basicmonomials}) and we denote them by $w_1',\ldots,w_r'$, respectively. We obtain
$$w'=w_1'\cdots w_r'x^{(r)}$$
and
$$w'=\tau([w_1']\prec(\cdots\prec([w_{r-1}']\prec([w_{r}']\prec x))\cdots))-
\sum_{m_1,\ldots,m_{t}}\sum_{i=1}^{r-1} x^{(m_1)}\cdots x^{(m_{t})} x^{(i)}.
$$
The remained monomials correspond to the case $h(w)>h(w_i)$.

For the case $h(w)>h(w_i)$, using the rewriting rule several times, we obtain $h(w_i)=1$. For this case, we have 
$$w_i=x^{(m_1)}x^{(m_2)}\cdots x^{(m_{t})}x'$$
By Definition \ref{basiswords} and inductive hypothesis on length of monomial,
$$x^{(m_1)}x^{(m_2)}\cdots x^{(m_{t})}$$
can be written as a sum of $[v_1],\ldots,[v_r]\in[U]$, and
$$w_i=\tau((\sum_{i=1}^r \lambda_i[v_i])\prec x),$$
where $[v_i]\prec x\in [U]$.

\end{proof}

\begin{corollary}
To obtain a basis of $\textrm{N-}\Nov\<X\>$ algebra, we place all possible permutations of the alphabet $X$ in monomials $[U]$ of the corresponding length.
\end{corollary}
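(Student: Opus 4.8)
The plan is to deduce the corollary directly from the Theorem together with the nonsymmetry of $\textrm{N-}\Nov$ recorded at the beginning of Section~3. Because the operad is nonsymmetric, its space of $n$-ary operations is exactly the degree-$n$ component $F_n$ of the one-generated free algebra $\textrm{N-}\Nov\<x\>$, namely the span of the basic trees (planar binary trees of degree $n$ with internal vertices labelled by $\prec$ and $\succ$). The free algebra on a vector space $V$ then splits as $\bigoplus_{n\ge 1} F_n\otimes V^{\otimes n}$. Taking $V=\Bbbk X$, this decomposition is precisely the formal statement that a monomial of $\textrm{N-}\Nov\<X\>$ is a basic tree whose $n$ leaves have been marked by letters of $X$ read from left to right; so the corollary reduces to tensoring a basis of $F_n$ with a basis of $(\Bbbk X)^{\otimes n}$ for each $n$.

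First I would make the leaf-labelling bijection explicit. A basic tree of degree $n$ carries an intrinsic left-to-right ordering of its $n$ leaves, so marking them by a word $x_{i_1}\cdots x_{i_n}$ with each $x_{i_j}\in X$ is the same datum as a pair consisting of a basic tree and a length-$n$ word. Under the embedding $\tau$ of Section~2 this amounts to substituting, in a weight $-1$ monomial of $\As\<x\>^{(d)}$, the prescribed letter of $X$ for the single generator $x$ at each position. Consequently the whole development of Section~4 transfers verbatim to the labelled setting, since $\tau$, the function $\rho$, the head $h$, and the order $>$ all depend only on the sequence of differentiation degrees and not on which letter occupies a given position.

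Next I would invoke the Theorem, which states that $[U_n]$ is a basis of $F_n$. A basis of $(\Bbbk X)^{\otimes n}$ is the set of all length-$n$ words in $X$, and tensoring the two bases yields a basis of $F_n\otimes(\Bbbk X)^{\otimes n}=\textrm{N-}\Nov\<X\>_n$. Under the bijection of the previous paragraph this basis is exactly the set of trees of $[U_n]$ with their leaves filled by the elements of $X$ in all possible ways. Summing over $n$ produces the asserted basis of $\textrm{N-}\Nov\<X\>$.

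The only point demanding care --- and the step I expect to be the main obstacle --- is confirming that the labelling is genuinely unconstrained: every word in $X$, not merely every injective one, contributes a distinct basis element. For this one uses that the identities \eqref{eq:LIdent1} and \eqref{eq:LIdent2} are multilinear with a fixed order of their variables, so they may relate different basic trees but never two distinct leaf-labellings of a single tree; this is exactly the content of nonsymmetry. Fixing a multidegree (the number of occurrences of each letter) reduces the claim to a finite-dimensional component, where the linear independence just established, together with the matching cardinality of labelled trees --- $n!\binom{2n-2}{n-1}$ in the multilinear case, in agreement with the formula $\dim(\textrm{N-}\Nov(n))$ of Section~2 --- closes the argument exactly as the dimension count closed the proof of the Theorem.
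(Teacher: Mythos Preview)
Your proposal is correct and follows the same route as the paper: the corollary is not given a separate proof there, because it is already justified by the remark at the start of Section~3 that $\textrm{N-}\Nov$ is nonsymmetric, so a basis of $\textrm{N-}\Nov\langle X\rangle$ is obtained from a basis of the one-generated free algebra by marking the leaves of basic trees with elements of $X$ in all possible ways. Your argument is simply a more explicit, tensor-product formulation of that same observation combined with the Theorem.
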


\begin{center} ACKNOWLEDGMENTS   \end{center}
This research was funded by the Science Committee of the Ministry of Science and Higher Education of the Republic of Kazakhstan (Grant No. AP14871710).
The author is grateful to Professor P.S. Kolesnikov for his comments and advice, thanks to which the article has acquired such a full-fledged look.


\begin{thebibliography}{99}


\bibitem{BN85}
A. A. Balinskii, S. P. Novikov, 
Poisson brackets of hydrodynamic type, Frobenius algebras and Lie algebras, 
Sov. Math. Dokl., 32, 228--231, (1985).


\bibitem{Bremner-Dotsenko}
M. R. Bremner, V. Dotsenko, {\em Algebraic Operads An Algorithmic Companion}, (2016).

\bibitem{BCZ2017}
L. A. Bokut, Y. Chen, Z. Zhang, 
Gr\"obner--Shirshov bases method for Gelfand--Dorfman--Novikov 
algebras, 
Journal of Algebra its Applications,  16(1), 1750001, 22 pp.  (2017).

\bibitem{DotIsmUmi2023}
V. Dotsenko, N. Ismailov, U. Umirbaev, Polynomial identities in Novikov algebras, Mathematische Zeitschrift, 303(3), 60, (2023).

\bibitem{DzhLofwall} 
A. S. Dzhumadil’daev, C. L\"ofwall, 
Trees, free right-symmetric algebras, free Novikov algebras and identities, 
Homology, Homotopy Appl., 4(2), 165--190 (2002).

\bibitem{Dzh-nonKoszul}
A. S. Dzhumadil'daev,
Codimension growth and non-Koszulity of Novikov operad, Communications in Algebra, {39}(8), 2943--2952 (2011).

\bibitem{GD79} 
I.~M. Gelfand, I.~Ya. Dorfman, 
Hamilton operators and associated algebraic structures, 
Functional analysis and its application, {13}, no.~4, 13--30 (1979).



\bibitem{KSO2019}
P. S. Kolesnikov, B. Sartayev, A. Orazgaliev,
Gelfand--Dorfman algebras, derived identities, 
and the Manin product of operads,
Journal of Algebra {539}, 260--284 (2019).

\bibitem{PK-BS2021} P. S. Kolesnikov, B. Sartayev, On the special identities of Gelfand--Dorfman algebras, Experimental Mathematics, DOI: 10.1080/10586458.2022.2041134, (2022).


\bibitem{PK-BS-perm} P.S. Kolesnikov, B.K. Sartayev, 
On the embedding of left-symmetric algebras into differential Perm algebras,  Communications in Algebra 50, 3246--3260, (2022).

\bibitem{erlagol2021}
B. Sartayev, P. Kolesnikov,
Noncommutative Novikov algebras, European Journal of Mathematics 9, 35 (2023), https://doi.org/10.1007/s40879-023-00632-1.


\bibitem{Loday2001}
J.-L. Loday, Dialgebras, https://arxiv.org/pdf/math/0102053, (2001). 


\bibitem{Loday2010}
J.-L. Loday,
On the operad of associative algebras with derivation,
Georgian Mathematical Journal {17}(2), 347--372 (2010). 


\bibitem{Osborn92}
J. M. Osborn, 
Novikov algebras, Nova J. Algebra Geom.
{1}, 1--14 (1992).

\bibitem{Osborn92-1}
J. M. Osborn, 
Simple Novikov algebras with an idempotent, 
Communications in Algebra {20}(9), 2729--2753 (1992).

\bibitem{Osborn94}
J. M. Osborn, 
Infinite-dimensional Novikov algebras of characteristic~0,
Journal of Algebra {167}, 146--167 (1994).

\bibitem{Pan2022}
A.S. Panasenko, Semiprime Novikov algebras, International Journal of Algebra and Computation, 32(7), pp. 1369–1378, (2022).

\bibitem{Shirshov1958}
A. I. Shirshov, On free Lie rings, Mat. Sb., 45, No. 2, 113–122 (1958).

\bibitem{Lyndon1958}
K. T. Chen, R. H. Fox, and R. C. Lyndon, Free differential calculus. IV: The quotient groups of the lower central
series, Ann. Math., 68, No. 2, 81–95 (1958).

\bibitem{Xu1996}
X. Xu, 
On simple Novikov algebras and their irreducible modules, Journal of Algebra {185}(3), 905--934 (1996).

\bibitem{Xu2001}
X. Xu, 
Classification of simple Novikov algebras and their irreducible modules of characteristic~0,
Journal of Algebra {246}(2), 673--707 (2001).

\bibitem{Zelm}
E. I. Zel'manov,
On a class of local translation invariant Lie algebras, Sov. Math. Dokl. {35}, 216--218 (1987).


\end{thebibliography}
\end{document}